\newtheorem*{ack}{Acknowledgements}
\newtheorem{theorem}{Theorem}
\newtheorem{theom}{Theorem}
\newtheorem{lemma}{Lemma}
\newtheorem*{prop}{Proposition}
\newtheorem{defin}{Definition}
\newtheorem{rem}{Remark}
\newenvironment{proof}{\noindent{\bf Proof:}}{$\hfill \Box$ \vspace{10pt}}  
\def\Xint#1{\mathchoice
{\XXint\displaystyle\textstyle{#1}}%
{\XXint\textstyle\scriptstyle{#1}}%
{\XXint\scriptstyle\scriptscriptstyle{#1}}%
{\XXint\scriptscriptstyle\scriptscriptstyle{#1}}%
\!\int}
\def\XXint#1#2#3{{\setbox0=\hbox{$#1{#2#3}{\int}$ }
\vcenter{\hbox{$#2#3$ }}\kern-.6\wd0}}
\def\dashint{\Xint-}
\newcommand{\hel} {
\hskip2.5pt{\vrule height7pt width.5pt depth0pt}
\hskip-.2pt\vbox{\hrule height.5pt width7pt depth0pt}
\, }
\newcommand{\restr}{\hel}
\begin{document}

\title{On the multidimensional Nazarov lemma.}
\author{Ioann Vasilyev}
\thanks{The author was supported by the Russian Science Foundation (grant No.~18-11-00053).}
\address{Universit\'e Paris-Est, LAMA (UMR 8050), 61 avenue du G\'en\'eral de Gaulle, 94010, Cr\'eteil, France}
\address{St.-Petersburg Department of V.A. Steklov Mathematical Institute, Russian Academy of Sciences (PDMI RAS), Fontanka 27, St.-Petersburg, 191023, Russia}
\email{milavas@mail.ru}

\begin{abstract}
In this article we prove a multidimensional version of the Nazarov lemma. The proof is based on an appropriate generalisation of the regularised system of intervals introduced in~\cite{nazhav} to several dimensions.  
\end{abstract}

\maketitle
\section{Introduction}
In the paper~\cite{nazhav} by V. Havin, F. Nazarov and J. Mashreghi the authors prove the following result.
\begin{theom}
\label{thm0}
Let $\Omega\in L^1(\mathbb R, dx/(1+x^2))\cap \mathrm{Lip}(\mathbb R)$ be a positive function. Then for each $\varepsilon > 0$ there exists a function $\Omega_1$, satisfying 
\begin{enumerate}
\item $\Omega(x) \leq \Omega_1(x)$ for all $x\in \mathbb R;$
\item $\Omega_1 \in L^1(\mathbb R, dx/(1+x^2));$
\item $\mathcal H\Omega_1 \in \mathrm{Lip}(\varepsilon, \mathbb R),$ where $\mathcal H$ is the Hilbert transform on the real line.
\end{enumerate}
\end{theom}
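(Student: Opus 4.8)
\emph{Proof idea.} The plan is to construct $\Omega_1$ explicitly as a $C^1$ function, with $\Omega_1''$ piecewise constant, adapted to a partition $\mathbb R=\bigsqcup_k I_k$ of the line into intervals — the \emph{regularised system of intervals} — designed so that (1) and (2) hold essentially by inspection, the real work being (3). First I would normalise: since the constant $1$ lies in $L^1(\mathbb R,dx/(1+x^2))$ and $\mathcal H1$ is constant, one may add a constant to $\Omega$ and assume $\Omega\ge 1$; this changes none of (1)--(3), and the Lipschitz hypothesis then controls the oscillation of $\Omega$ over intervals of length $\asymp\Omega/\varepsilon$.

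Next I would choose the intervals. Their lengths $|I_k|$ should be of order $\varepsilon^{-1}\sup_{I_k}\Omega$: this magnitude is essentially forced, because a hump of $\Omega_1$ of height $\asymp\Omega$ and width $\ell$ already contributes $\asymp\Omega/\ell$ to $\|(\mathcal H\Omega_1)'\|_\infty$, and any admissible $\Omega_1$ must carry such a hump wherever $\Omega$ is large. Crucially, the lengths should vary slowly from one interval to the next (say $\bigl||I_{k+1}|-|I_k|\bigr|$ stays small); this slow variation is the "regularisation", and a self-consistent slowly varying sequence of lengths can be produced by a greedy/inductive construction, the Lipschitz bound on $\Omega$ preventing the length from being forced to change too abruptly. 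I would then take $\Omega_1\in C^1(\mathbb R)$, with $\Omega_1''$ equal to a constant on each $I_k$, arranged so that the graph of $\Omega_1$ lies above that of $\Omega$ on every $I_k$; then (1) is immediate and (2) follows from $\Omega_1\lesssim(1+\|\Omega\|_{\mathrm{Lip}}/\varepsilon)\,\Omega$ pointwise.

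The hard part, and where I expect the main obstacle, is (3). One has $(\mathcal H\Omega_1)'=\mathcal H(\Omega_1')$, and since $\Omega_1''$ is piecewise constant, differentiating once more gives the explicit identity $(\mathcal H\Omega_1)''(x)=\frac1\pi\sum_k j_k\log|x-a_k|$, where $j_k$ is the jump of $\Omega_1''$ at the junction $a_k$. Integrating, $\|(\mathcal H\Omega_1)'\|_\infty$ is bounded by $\sup_x\bigl|\frac1\pi\sum_k j_k\int^x\log|t-a_k|\,dt\bigr|$, and the whole matter reduces to showing that this quantity is $\le\varepsilon$. This is where the regularity of the system is indispensable: with $|j_k|\lesssim\varepsilon/|I_k|$ and the junctions $a_k$ spaced according to the slowly varying lengths $|I_k|$, consecutive terms of the sum largely cancel and its tails decay, so the sum stays $O(\varepsilon)$; a partition with abruptly changing lengths would instead permit a geometric build-up and the estimate would fail. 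Replacing $\varepsilon$ by $\varepsilon$ over the resulting absolute constant finishes the proof.

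In short, the crux is making the partition simultaneously fine enough for a $C^1$, piecewise-parabolic $\Omega_1$ to majorise $\Omega$, regular enough for the logarithmic sum controlling $(\mathcal H\Omega_1)'$ to be $O(\varepsilon)$, and moderate enough for $\Omega_1$ to remain in $L^1(\mathbb R,dx/(1+x^2))$. Reconciling these three is exactly what the regularised system of intervals is built for; producing its correct analogue in $\mathbb R^n$, where both "slowly varying" and the relevant integral kernels become more delicate, should be the main new difficulty of the paper.
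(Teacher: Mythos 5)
Your overall architecture --- intervals of length comparable to $\varepsilon^{-1}\sup_{I_k}\Omega$, a ``regularised'' (slowly varying) system, a majorant adapted to it, with the real work being the derivative bound on $\mathcal H\Omega_1$ --- is indeed the strategy of Nazarov's proof and of this paper's two--dimensional version. But two of your steps have genuine gaps. First, your justification of (2): the pointwise bound $\Omega_1\lesssim(1+\kappa/\varepsilon)\,\Omega$ (with $\kappa$ the Lipschitz constant of $\Omega$) is false in the only interesting regime $\varepsilon\ll\kappa$, and not just for your construction but intrinsically. Take $\Omega\equiv 1$ outside one tent of height $H$ and slope $1$: by your own heuristic (which is correct) the admissible majorant must carry a hump of height $\asymp H$ and width $\asymp H/\varepsilon$ around the tent, hence $\Omega_1\gtrsim H$ on a set of length $\asymp H/\varepsilon$ on which $\Omega\equiv 1$, so the pointwise ratio is unbounded as $H\to\infty$. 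What is true is only an integral comparison, $\int_{I}\Omega_1\lesssim(\kappa/\varepsilon)\int_{I}\Omega$, obtained from the Lipschitz property exactly as in Lemma~\ref{eprst} (in one dimension $\int_I\Omega\gtrsim(\sup_I\Omega)^2/\kappa$); and to pass from this to $\Omega_1\in L^1(dx/(1+x^2))$ you additionally need the Poisson weight to be essentially constant on each $I_k$, i.e.\ the preliminary reduction $\Omega(x)\lesssim\varepsilon|x|$, which is precisely what inequality~\eqref{kryakrya} provides in the paper. As written, (2) is unproved.

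Second, the heart of (3) is asserted rather than proved, and the one reason you offer for the feasibility of the slowly varying partition is wrong. The Lipschitz bound does not prevent the natural length $\varepsilon^{-1}\sup_{I}\Omega$ from changing abruptly: along a climb of $\Omega$ with slope $\kappa>\varepsilon$ the implicit prescription $|I|\asymp\varepsilon^{-1}\sup_I\Omega$ may have no solution at all (the interval ``runs away''), and in any case consecutive natural lengths can differ by factors of order $\kappa/\varepsilon$, so any constant you extract from that ``slow variation'' swallows the target bound $\varepsilon$. The regularity must be imposed by the construction itself; in the paper this is done by surrounding each maximal essential square by the tails $t_p(a)$ with $\mu_p=[(2.5)^p]$, which forces the quantitative separation $d(2a,2b)\geq 2\mu_{k-2}\,l(a)$ when $l(b)=2^k l(a)$, $k\geq 2$, together with bounded multiplicity of the doubled squares; these two facts make the sums $S_2$ and $S_3$ converge absolutely (no cancellation between consecutive terms is used or needed), while the choice $2.5<2^{3/2}$ keeps the added mass summable so that (2) survives, cf.\ formula~\eqref{chastnoe}. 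Your sketch never constructs such a system, never states the separation property it must satisfy, and compresses the decisive estimate into ``consecutive terms largely cancel and the tails decay''; until the partition and that estimate are made explicit, the proposal is a plausible outline of the known proof rather than a proof.
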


This theorem, sometimes called Nazarov's lemma was crucial in~\cite{nazhav} for the beautiful ``geodesic'' proof of the so-called First Beurling--Malliavin theorem. The latter theorem is among the deepest results in harmonic analysis. To illustrate this, we mention the articles~\cite{nazhav},\cite{polt},\cite{makpol},\cite{bermal1},\cite{bermal2},\cite{belbarul}, where Theorem~\ref{thm0} was shown to apply many results in the theory of exponential systems. We especially emphasize that Theorem~\ref{thm0} is a cornerstone of the proof of the famous Second Beurling--Malliavin Theorem. On top of that, Theorem~\ref{thm0} was recently used by J. Bourgain and S. Dyatlov in the theory of resonances for hyperbolic surfaces, see~\cite{bourdyat}. 

This article is devoted to the multidimensional version of Nazarov's lemma. Before stating our main result, let us first recall some definitions and fix some notations.

\begin{defin}
The \textbf{multidimensional Poisson measure} $dP_n$ is defined by the following formula $$dP_n(x):=\frac{dx}{(1+|x|^2)^{\frac{n+1}{2}}}.$$
If $n=2$, we shall omit the subscript $2$ and simply write $dP$ instead of $dP_2.$
The corresponding weighted Lebesgue space $L^1(dP)$ is the space of all functions $f$ satisfying $\int_{\mathbb R^2}fdP<\infty.$
\end{defin}
\begin{defin}
The \textbf{Riesz transformations} of a function $f\in L^{1}(dP)$ for $j=1,2$ are defined as the following principal value integrals
$$\mathcal R_jf(x):=c_2\dashint_{\mathbb R^2}\Bigl(\frac{t_j-x_j}{|t-x|^{3}}-\frac{t_j}{(1+|t|^2)^{3/2}}\Bigr)f(t)dt.$$
Here, the normalization constant $c_d$ is given by $c_d:=(\pi \gamma_{d-1})^{-1}$, where $\gamma_{d-1}$ is the Euclidean volume of the $(d-1)$-dimensional Euclidean ball. It is also worth noting that the integrals above converge for almost all $x\in\mathbb R^2.$ 
\end{defin}
\begin{rem}
The space of Lipschitz functions, i.e. functions $f$ satisfying for all $x,y\in \mathbb R^2$ the following inequality: $|f(x)-f(y)|\leq C|x-y|$ with $C>0$ independent of $x,y$, will be denoted by $\mathrm{Lip}(\mathbb R^2)$. By $\mathrm{Lip}(\kappa,\mathbb R^2)$ we shall denote all Lipschitz functions with the Lipschitz constant $C$ satisfying $C\leq \kappa$.  
\end{rem}
\begin{rem}
We accumulate here the list of the frequently used technical abbreviations and notations. For a square (in this article we consider only squares with facets parallel to the coordinate axes) $a\subset \mathbb R^2$ its edge length is denoted by $l(a), c_a$ will stand for the centre of $a$ and $\alpha a$ with $\alpha$ positive will be the square centred at $c_a$ and whose edge length equals $\alpha l(a).$ 
\end{rem}

We are now ready to present the main result of this article.

\begin{theorem} (the two--dimensional Nazarov lemma)
\label{thm1}
Let $\Omega\in L^1(dP)\cap \mathrm{Lip}(\mathbb R^2)$ be a positive function. Then for each $\varepsilon > 0$ there exists a function $\Omega_1
$, satisfying 
\begin{enumerate}[label=(\Alph*)]
\item \label{a} $\Omega(x_1,x_2) \leq \Omega_1(x_1,x_2)$ for all $(x_1,x_2)\in \mathbb R^2;$
\item \label{b} $\Omega_1 \in L^1(dP);$
\item \label{c} $\mathcal R_1\Omega_1 \in \mathrm{Lip}(\varepsilon, \mathbb R^2), \mathcal R_2\Omega_1 \in \mathrm{Lip}(\varepsilon, \mathbb R^2).$
\end{enumerate}
\end{theorem}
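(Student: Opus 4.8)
The plan is to mimic the one-dimensional construction of Havin–Mashreghi–Nazarov, replacing the regularized system of intervals by a regularized system of dyadic squares adapted to $\Omega$. First I would fix $\varepsilon>0$ and, using that $\Omega\in\mathrm{Lip}(\mathbb R^2)$, perform a Calderón–Zygmund-type stopping-time decomposition of $\mathbb R^2$ into a countable family $\{Q_k\}$ of pairwise (almost) disjoint dyadic squares on which $\Omega$ is, up to a bounded factor, comparable to a constant $\omega_k\sim\Omega(c_{Q_k})$, with the selection rule tuned so that $l(Q_k)\cdot(\text{oscillation of }\Omega\text{ on }Q_k)$ is small compared to $\varepsilon$. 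The "regularization" step — and this is the delicate part inherited from \cite{nazhav} — is to enlarge/merge these squares so that neighbouring squares in the family have comparable sizes (a Whitney-type regularity: if $Q_j\cap \lambda Q_k\neq\varnothing$ then $l(Q_j)\asymp l(Q_k)$), while keeping the total mass $\sum_k \omega_k |Q_k|\,(1+|c_{Q_k}|^2)^{-3/2}$ controlled by $\|\Omega\|_{L^1(dP)}$ and keeping $\omega_k\gtrsim \Omega$ on each $Q_k$.

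Next I would define $\Omega_1:=\sum_k \omega_k\,\mathbbm 1_{Q_k}$ (suitably overlapped so that $\Omega_1\geq\Omega$ pointwise, giving~\ref{a}), and note~\ref{b} is immediate from the mass bound since $dP$ is comparable to $(1+|c_{Q_k}|^2)^{-3/2}\,dx$ on $Q_k$. The heart of the matter is~\ref{c}: estimating $\nabla \mathcal R_j\Omega_1$. Writing $\mathcal R_j\Omega_1(x)=c_2\sum_k \omega_k\int_{Q_k}\bigl(K_j(t-x)-K_j(t)\bigr)dt$ with $K_j(y)=y_j/|y|^{3}$, I would split, for two nearby points $x,x'$, the sum over squares into: (i) squares far from $x$ (at distance $\gtrsim$ several times their own size and $\gg|x-x'|$), where the standard Calderón–Zygmund gradient bound $|\nabla K_j(t-x)|\lesssim |t-x|^{-3}$ combined with the mass control and the Whitney regularity gives a convergent, small contribution; (ii) the boundedly many squares close to $x$ — here one uses the cancellation built into the regularized system exactly as in the one-dimensional lemma: the contribution of each local square to $\mathcal R_j\Omega_1$ is Lipschitz with a constant controlled by $\omega_k\, l(Q_k)$, which is $\lesssim\varepsilon$ by the stopping-time tuning, and the Whitney regularity guarantees only $O(1)$ such squares overlap near any point.

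I expect the main obstacle to be the correct formulation and construction of the two-dimensional regularized system of squares: in one dimension the intervals are linearly ordered and "regularization" is an explicit merging procedure, whereas in $\mathbb R^2$ one must handle the combinatorics of Whitney squares of a non-open set and ensure simultaneously (a) domination of $\Omega$, (b) the global $dP$-mass bound, and (c) the local smallness $\omega_k\,l(Q_k)\lesssim\varepsilon$ together with bounded overlap of dilates $\lambda Q_k$. Once that system is in hand, the two Riesz kernels are treated symmetrically and the estimate for $\mathcal R_1$ and $\mathcal R_2$ is identical, so no extra work is needed for the vector-valued conclusion; the convergence of the principal-value integrals defining $\mathcal R_j\Omega_1$ a.e.\ follows from~\ref{b} as recalled in the definition.
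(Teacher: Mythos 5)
There is a genuine gap, in fact two. First, your majorant $\Omega_1=\sum_k\omega_k\mathbbm 1_{Q_k}$ cannot work as written: the Riesz transform of an indicator of a square has an unbounded gradient (it blows up logarithmically along the edges of $Q_k$), so the claim that ``the contribution of each local square to $\mathcal R_j\Omega_1$ is Lipschitz with a constant controlled by $\omega_k\,l(Q_k)$'' fails already for a single term, and no cancellation mechanism between adjacent squares is specified that would repair it. The paper avoids this by building the majorant from smooth bumps $\phi_a(x)=l(a)\phi((x-c_a)/l(a))$, for which $\|\nabla(\mathcal R_j\phi_a)\|_\infty\lesssim 1$ by scaling. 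Moreover the quantity you tune is the wrong one: since $\mathcal R_j$ is homogeneous of degree $0$, the Lipschitz norm of $\mathcal R_j$ of a bump of height $h$ on a square of side $l$ scales like $h/l$, not $h\,l$; so what you need is squares that are \emph{wide} compared with the size of $\Omega$ on them ($\omega_k\lesssim\varepsilon\,l(Q_k)$), whereas your stopping rule (``$l(Q_k)\cdot\mathrm{osc}$ small'') pushes towards small squares and makes $h/l$ worse. The paper obtains the correct smallness globally, by first reducing to $\Omega(x)\le\varepsilon|x|$ via the inequality $\Omega(x)\lesssim|x|\bigl(\int_{|t|\ge|x|}\Omega\,dP\bigr)^{1/3}$ and then tiling the plane by squares $C_{i,j,k}$ of side comparable to $|x|$, so that the local lemma applies with $\delta=\varepsilon/2$. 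This also exposes the real tension you do not address: widening the bumps to make $h/l$ small threatens the $L^1(dP)$ bound, and controlling this requires the quantitative mass estimate $\delta^2/\kappa^2\int F\lesssim\int_Q f$, proved via the cone (Hadamard--Landau type) lemma; ``keeping the total mass controlled'' is asserted in your plan but is exactly what has to be proved.

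Second, the Whitney-type regularity you postulate (neighbouring squares of comparable size, bounded overlap of dilates) is too weak for the far-field estimate. In the paper the dangerous sum is over squares $b$ with $l(b)=2^kl(a)$, $k\ge 2$, near a point $x\in 2a$: each such scale contributes about $2^kl(a)\cdot\int_{\{|u|\ge d\}}|u|^{-3}du\sim 2^kl(a)/d$, and if one only knows $d\gtrsim l(a)$ or even $d\gtrsim l(b)$ the sum over scales is not bounded uniformly. Convergence is achieved only because the regularized system (the ``tails'' $t_p(a)$ built with $\mu_p=[(2.5)^p]$) forces the super-dyadic separation $d(2a,2b)\ge 2\mu_{k-2}l(a)$, so the sum becomes $\sum_k 2^k/\mu_{k-2}<\infty$; the admissible growth window $\mu_p\approx\lambda^p$ with $2<\lambda<2^{(n+1)/n}$ (here $2<\lambda<2^{3/2}$), constrained on the other side by the volume count $\#t_p\lesssim\mu_p^2$ in the mass estimate, is precisely the dimension-dependent heart of the construction. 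Your proposal correctly identifies that a two-dimensional regularization is the delicate point, but the property it would have to deliver is this quantitative separation-versus-mass trade-off, not mere comparability of neighbours, and without it both the far-field Lipschitz bound and the $L^1(dP)$ control remain unproved.
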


Note that Theorem~\ref{thm1} is a two--dimensional generalization of Theorem~\ref{thm0}. Of course, a similar result holds if the space $\mathbb R^2$ is replaced by a general Euclidean space. We state and discuss the corresponding result in the Appendix (see Theorem~\ref{thm2}).

\begin{rem}
Our proof of Theorem~\ref{thm1} relies on the one dimensional pattern of~\cite{nazhav}. However, there is a subtlety: the construction of the regularized system of intervals from~\cite{nazhav} which is the crucial step of the proof of Theorem~\ref{thm0} should be nontrivially adapted for the case of several dimensions (see Remark~\ref{rem0} and Theorem~\ref{thm2}). Moreover, we show that the regularization is a quite general operation, since it can be successfully applied to an arbitrary family of mutually disjoint dyadic squares, see Remark~\ref{mda}. On top of that, as we shall see, most of the estimates from the proof of Theorem~\ref{thm0} are harder in $\mathbb R^n$.
\end{rem}

We hope that our lapidary style will not disturb the reader. In fact, one of the goals of this article was to write down a more ``concrete'' and self-contained version of the proof of Theorem~\ref{thm0}. 

Several comments are in order. As we have already mentioned, the principal step of our proof of Theorem~\ref{thm1} is a suitable generalisation of the regularized family of intervals, constructed in~\cite{nazhav} to several dimensions. Note that this one-dimensional regularized system of intervals is of its own interest and has already obtained applications in harmonic analysis. For instance,  in~\cite{sz} P. Zatitskiy and D. Stolyarov use an appropriate variant of this construction and apply it to some questions related to the Bellman function method.

Alas, our Theorem~\ref{thm1} does not imply the multidimensional Beurling--Malliavin theorem. However, it is possible to deduce from Theorem~\ref{thm1} that every function that satisfies the conditions of the Beurling--Malliavin theorem can be minorized by the absolute value of a function with the spectrum in an arbitrary thin strip. Unfortunately, our main construction from Theorem~\ref{thm1} does not work if the Riesz transforms are replaced by the double Hilbert transform. The corresponding questions alluded in this paragraph seem to the author both hard and intriguing.

The author had asked V.P. Havin in 2014 whether Theorem~\ref{thm1} holds true. Professor Havin suggested that Theorem~\ref{thm1} can be derived directly from Theorem~\ref{thm0} using the rotation method of A. Calder\'on and A. Zygmund. However, I am still not able to deduce Theorem~\ref{thm1} from Theorem~\ref{thm0} directly, i.e. without modifying the proof of the latter theorem.  

\begin{ack}
The author is kindly grateful to Sergei V. Kislyakov for a number of helpful suggestions.
\end{ack}

\section{The proof of the two--dimensional Nazarov lemma}

So, let us prove Theorem~\ref{thm1}.

\begin{proof}

We shall first prove the following \textbf{local} version of the two--dimensional Nazarov lemma.

\begin{lemma}(the local Nazarov lemma)
\label{local}
Let $Q\subset \mathbb R^2$ be a square and let $\delta>0$ and $\kappa>0$ be real numbers. Suppose that $f\in \mathrm{Lip}(\kappa,Q)$ is a nonnegative function such that $\|f\|_{L^\infty(Q)}\leq \delta l(Q).$ Then there exists a function $F\in \mathrm C^{\infty}(\mathbb R^2),$ such that
\begin{enumerate}
\item \label{one}$F=0$ outside $3/2Q,$
\item \label{two} $ f(x_1,x_2)\leq F(x_1,x_2)$ for all $(x_1,x_2)\in Q,$
\item \label{three} $\|\nabla (\mathcal R_1F)\|_{\infty}\lesssim \delta, \|\nabla (\mathcal R_2F)\|_{\infty}\lesssim \delta,$ 
\item \label{four} $\delta^2/\kappa^2\int_{\mathbb R^2}F(x_1,x_2) dx_1 dx_2 \lesssim \int_{Q}f(x_1,x_2)dx_1 dx_2.$
\end{enumerate}
\end{lemma}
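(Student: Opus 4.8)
The plan is to build $F$ as a smoothed version of a step function adapted to the size of $f$ on $Q$. First I would normalize: by translation and dilation we may assume $Q$ is centred at the origin with $l(Q)=1$, so that $f\in\mathrm{Lip}(\kappa,Q)$, $f\ge 0$ and $\|f\|_{L^\infty(Q)}\le\delta$. The natural first attempt is to take $F_0:=\delta\cdot\mathbbm 1_{Q}$, which dominates $f$ and has the right $L^1$ size ($\int F_0\le\delta$), but its Riesz transforms are merely BMO, not Lipschitz — the jump across $\partial Q$ is fatal. The fix, following the one-dimensional pattern of~\cite{nazhav}, is to partition $Q$ into a Whitney-type family of dyadic subsquares $\{q_j\}$ clustering toward $\partial Q$, on each of which $f$ varies by at most $\kappa\,l(q_j)$, and to replace $f$ there by a constant $\approx \kappa\, l(q_j)$ plus the ``bulk'' level $\delta$; then mollify at scale comparable to $l(q_j)$ on each $q_j$. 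Concretely, one sets $F:=\big(\delta\,\mathbbm 1_{\frac{5}{4}Q}+g\big)*\varphi_{\epsilon}$ where $g$ is a nonnegative function supported near $\partial Q$ encoding the local oscillation of $f$, and $\varphi$ is a fixed bump; the mollification scale is chosen so small that \ref{one} and \ref{two} survive.

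The verification of \ref{one} and \ref{two} is then routine: \ref{two} holds because $F\ge \delta$ on $Q$ after mollification provided the mollification radius is $\le \frac14 l(Q)$ and $\delta\ge\|f\|_\infty$, and \ref{one} holds because everything is supported in $\frac{5}{4}Q$ and we mollify at scale $<\frac18 l(Q)$. Claim \ref{four} reduces to checking $\int g\lesssim (\kappa^2/\delta^2)\int_Q f$: the Whitney squares touching $\partial Q$ have total area $O(1)$ only after weighting, and the key point — exactly as in one dimension — is that a square $q_j$ of side $2^{-k}$ contributes $\kappa 2^{-k}$ in height over area $2^{-2k}$, i.e. $\kappa 2^{-3k}$, and one sums a convergent geometric-type series; the lower bound $\int_Q f$ is controlled from below because $f$ being Lipschitz with $\|f\|_\infty\le\delta l(Q)$ forces $f$ to be bounded below by a fixed fraction of its sup on a set of definite measure, giving $\int_Q f\gtrsim \delta^2/\kappa$ when $f$ is ``large'', and the estimate is trivial when $f$ is small.

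The heart of the matter is \ref{three}: estimating $\|\nabla\mathcal R_j F\|_\infty\lesssim\delta$. Writing $\mathcal R_jF=c_2\,\partial_j(-\Delta)^{-1/2}F$ in the principal-value sense, the gradient $\nabla\mathcal R_j F$ is a Calderón–Zygmund operator of convolution type applied to $F$, with a kernel homogeneous of degree $-2$. One splits the contribution to $\nabla\mathcal R_jF(x)$ into the piece coming from the bulk term $\delta\,\mathbbm 1_{\frac54 Q}*\varphi_\epsilon$ and the piece from $g*\varphi_\epsilon$. For the bulk term, the mollified indicator has gradient of size $\lesssim\delta/\epsilon$ but supported in an $\epsilon$-neighbourhood of $\partial(\frac54 Q)$, of measure $\lesssim\epsilon$; applying the degree-$(-1)$ kernel of $\mathcal R_j$ and differentiating, the singular integral against a function of $L^1$-norm $\lesssim\delta$ and the geometry of a flat boundary yields a bound $\lesssim\delta$, the logarithmic losses being killed by the product structure and the fact that the boundary is a union of segments. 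For the $g$-term, one decomposes dyadically by the Whitney scales: on each annular layer at distance $\sim 2^{-k}$ from $\partial Q$, $g*\varphi_\epsilon$ has magnitude $\lesssim\kappa 2^{-k}$ and gradient $\lesssim\kappa$, so its contribution to the degree-$(-2)$-kernel integral is $\lesssim\kappa\cdot\log$-type, and summing over $k$ against the geometric decay recovers $\lesssim\delta$ using $\|f\|_\infty\le\delta l(Q)$ to convert the $\kappa$-bounds into $\delta$-bounds at the relevant scales. I expect this last step — controlling the singular integral of the gradient near the flat pieces of $\partial Q$ and near the corners, uniformly in $\epsilon$, and tracking how the $\kappa$-dependence collapses into a clean $\delta$ — to be the main obstacle; it is precisely here that the ``regularized system of squares'' must be engineered so that adjacent Whitney squares differ in scale by a bounded factor, so that no jump in $F$ ever produces a non-integrable contribution to $\nabla\mathcal R_jF$.
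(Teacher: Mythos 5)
Your construction is organized around the wrong set: it is adapted to the boundary of $Q$, whereas the real difficulty is adapted to the set where $f$ is large, and this mismatch breaks property \ref{four}. Because your majorant contains the bulk term $\delta\,\mathbbm{1}_{\frac54 Q}*\varphi_\epsilon$, you have $F\gtrsim\delta$ on all of $Q$ (normalizing $l(Q)=1$), hence $\int F\gtrsim\delta$ no matter how small $f$ is; but the hypotheses allow $f$ to be, say, a single cone of slope $\kappa$ and height $\eta\ll\delta$, for which $\int_Q f\sim\eta^3/\kappa^2$, and then $(\delta^2/\kappa^2)\int F\sim\delta^3/\kappa^2\gg\int_Q f$. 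Your claimed lower bound ``$\int_Q f\gtrsim\delta^2/\kappa$ when $f$ is large'' does not follow from $f\in\mathrm{Lip}(\kappa,Q)$ and $\|f\|_{L^\infty(Q)}\le\delta l(Q)$: the correct cone bound is $\int_Q f\gtrsim\|f\|^3_{L^\infty(Q)}/\kappa^2$ with the \emph{actual} sup, which may be far below $\delta l(Q)$; and the case of small $f$ is not ``trivial'' --- it is precisely the case forcing the majorant itself to be small wherever $f$ is small, which is the source of the whole tension between \ref{three} and \ref{four}.

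The second gap is the $\kappa$-to-$\delta$ conversion you invoke for \ref{three}: corrections of height $\approx\kappa\,l(q_j)$ living at scale $l(q_j)$ produce $\|\nabla(\mathcal R_1\cdot)\|_\infty\sim\kappa$, and nothing in the hypotheses makes $\kappa\lesssim\delta$, so the asserted collapse of $\kappa$-bounds into $\delta$-bounds has no mechanism behind it. In the paper's proof $\kappa$ never enters \ref{three}: after normalizing $\delta=1$, $Q=Q^*$, one covers $\{f>0\}$ by the maximal \emph{essential} dyadic squares (those with $\|f\|_{L^\infty(a)}\ge l(a)/2$), surrounds each by a regularized ``tail'' of dyadic squares whose side length drops by the factor $2^{-p}$ across concentric layers of width $\mu_p=[(2.5)^p]$ cells, passes to the maximal squares $\tau$ of the union of tails, and sets $F=\sum_{a\in\tau}l(a)\phi\bigl((x-c_a)/l(a)\bigr)$. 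Each bump has height comparable to its side, so $\|\nabla(\mathcal R_1\phi_a)\|_\infty\lesssim1$ individually; a separation lemma for the doubled squares $2a,2b$ (possible because $\mu_p$ grows faster than $2^p$) makes the sum over distant bumps converge uniformly; maximality of the essential squares gives $F\ge f$; and the cone (Hadamard--Landau) bound $\int_a f\gtrsim\|f\|^3_{L^\infty(a)}/\kappa^2$ together with $\#t_p(a)\lesssim\mu_p^2<2^{3p}$ yields \ref{four}. The boundary of $Q$, around which your Whitney decomposition and mollification are built, is essentially a non-issue: a single smooth bump of height $\delta l(Q)$ at scale $l(Q)$ already handles it.
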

\begin{rem}
The signs $\lesssim$ and $\gtrsim$ indicate that the left-hand (right-hand) part of an inequality is less than the right-hand (left-hand) part multiplied by a constant a $C$ independent of $Q$. 
\end{rem}
\begin{proof} 
First, we assume, as we can, that $\delta=1$ and that $Q=[-1/2,1/2]^2=:Q^{*}$. Indeed, we can assume this with no loss of generality since in the general case we can consider the function $\widetilde{f}(x)=f\left(xl(Q)+c_Q\right)/\left(\delta l(Q)\right).$ Note that then $\|\widetilde{f}\|_{L^{\infty}(Q^{*})}\leq l(Q^*)\equiv 1$ and $\widetilde{f}\in \mathrm{Lip}(\kappa/\delta,Q^*).$ Hence there exists a majorant $\widetilde{F}$  of the function $\widetilde{f}$ which equals $0$ outside $3/2Q^*$ and satisfying properties~\ref{three} and~\ref{four} with $\delta=1$ and $\kappa^{\prime}=\kappa/\delta.$ Then the function $F(u):= \delta l(Q) \widetilde{F}\left((u-c_Q)/l(Q)\right)$ will be the needed majorant of the function $f$. Second, from now on we assume, once again without loss of generality, that $\kappa\geq 1$. Indeed, if $\kappa<1$, then it suffices to consider the function $\widetilde{f}(x)=1/\kappa f(x)$. Third, we recall one useful definition, borrowed from~\cite{nazhav}.
\begin{defin}
We call a dyadic square $a$ \textbf{essential}, if $\|f\|_{L^{\infty}(a)}\geq l(a)/2.$ The set of all essential squares will be denoted $A_f.$
\end{defin}
Since the inequality $f(x)>0$ is equivalent to $f(x)>2^{-l_0}$ with some $l_0\in \mathbb N,$
$$\{x\in Q^*:f(x)>0\}=\bigcup_{a\in A_f} a.$$
Next, we define $A_f^m$, the subset of $A_f$ composed of its maximal elements (by inclusion). Note that
$$\bigcup_{a\in A_f} a=\bigcup_{a\in A_f^m} a.$$

We will now associate to each square $a\in A_f^m$ its two dimensional ``tail'' $t(a).$ Informally these tails are defined as follows. The tail $t(a)$ is a family of dyadic squares that is composed of a countable number of finite series $t_p(a),p=1,2,\ldots$ of cells. The squares from the family $t_p(a)$ all have the edge length equal to $l(a)/2^p$ and the union of cells from the series $t_p(a)$ forms the set $\{x\in \mathbb R^2: l(a)/2+\sum_{q=1}^{p-1}([2.5]^q)/2^q l(a)\leq\|x-c_a\|_{\infty}< l(a)/2+\sum_{q=1}^{p}([2.5]^q/2^q) l(a)\}$, where $\|\ldots\|_{\infty}$ stands for the $\sup$ norm in the plane.
See the picture below. \newline

\begin{tikzpicture}
\draw[step=0.5cm, black,very thick] (0,0) grid (3,3);
\fill[white] (1,1) rectangle (2,2);
\draw[step=1cm, black,very thick] (1,1) grid (2,2);
\draw[step=0.25cm, black,very thick] (3,-1.5) grid (4.5,4.5);
\draw[step=0.25cm, black,very thick] (-1.5,-1.5) grid (0,4.5);
\draw[step=0.25cm, black,very thick] (0,-1.5) grid (3,0);
\draw[step=0.25cm, black,very thick] (0,3) grid (3,4.5);
\end{tikzpicture}
\newline
In this picture the biggest square in the center is $a$. The squares of the edge length $l(a)/2$ that surround $a$ form the family $t_1(a).$ The squares of the edge length $l(a)/4$ that constitute $t_2(a).$ 

Here is the formal definition of the tails. If $a=[m2^{-k},(m+1)2^{-k})\times[n2^{-k},(n+1)2^{-k})$ with some $m,k,n\in \mathbb N,$ then $t(a)=\{t_p(a)\}_{p=0}^{+\infty},$ where $t_0(a):=a$ and for $p\geq 1,$ 
\begin{multline*}
t_p(a):=t_{p,\mathrm{up}}(a)\cup t_{p,\mathrm{down}}(a)\cup t_{p,\mathrm{right}}(a)\cup t_{p,\mathrm{left}}(a)\cup \\
t_{p,\mathrm{uprght}}(a)\cup t_{p,\mathrm{uplft}}(a)\cup t_{p,\mathrm{dwnrght}}(a)\cup t_{p,\mathrm{dwnlft}}(a).
\end{multline*}
These eight sets are defined in the following manner. First, we set $\mu_p:=[(2.5)^p],\\ \alpha_1:=2, \beta_1:=-2$ and for $p\geq 2, \alpha_p=2^p+\mu_12^{p-1}+\cdots+\mu_{p-1}2$ and $\beta_p=-\mu_12^{p-1}-\cdots-\mu_{p-1}2-\mu_p.$ Second, define the squares $$Q_{p,i,j}:=[m2^{-k}+j2^{-k-p},m2^{-k}+(j+1)2^{-k-p})\times[n2^{-k}+i2^{-k-p},n2^{-k}+(i+1)2^{-k-p}).$$ The eight sets are now defined as follows
\begin{align*}
t_{p,\mathrm{up}}(a)&=\{Q_{p,i,j}\}_{(i,j)=(\alpha_p,\beta_p+\mu_p)}^{(\alpha_p+\mu_p-1,\alpha_p-1)} & t_{p,\mathrm{down}}(a)&= \{Q_{p,i,j}\}_{(i,j)=(\beta_p,\beta_p+\mu_p)}^{(\beta_p+\mu_p-1,\alpha_p-1)}\\
t_{p,\mathrm{left}}(a)&=\{Q_{p,i,j}\}_{(i,j)=(\beta_p+\mu_p,\beta_p)}^{(\alpha_p-1,\beta_p+\mu_p-1)} & t_{p,\mathrm{right}}(a)&=\{Q_{p,i,j}\}_{(i,j)=(\beta_p+\mu_p,\alpha_p)}^{(\alpha_p-1,\alpha_p+\mu_p-1)} \\
t_{p,\mathrm{uprght}}(a)&=\{Q_{p,i,j}\}_{(i,j)=(\alpha_p,\alpha_p)}^{(\alpha_p+\mu_p-1,\alpha_p+\mu_p-1)} & t_{p,\mathrm{uplft}}(a)&=\{Q_{p,i,j}\}_{(i,j)=(\alpha_p,\beta_p)}^{(\alpha_p+\mu_p-1,\beta_p+\mu_p-1)}\\
t_{p,\mathrm{dwnrght}}(a)&=\{Q_{p,i,j}\}_{(i,j)=(\beta_p,\alpha_p)}^{(\beta_p+\mu_p-1,\alpha_p+\mu_p-1)} & t_{p,\mathrm{dwnlft}}(a)&=\{Q_{p,i,j}\}_{(i,j)=(\beta_p,\beta_p)}^{(\beta_p+\mu_p-1,\beta_p+\mu_p-1)}.
\end{align*}
Note that $\#t_p(a)\lesssim \mu_p^2.$ To understand this, consider the area of the bigger square centred at $c_a$ of edge length $l(a)+2\sum_{q=1}^{p}([2.5]^q/2^q) l(a)$, that consists of about $\#t_p(a)$ smaller squares of edge length $2^{-k-p}.$

Next, we are going to define one system of squares that we shall call $\tau.$ First, we pose $B_f:=\{t(a)\}_{a\in A_f^m}$ and afterwords we define $\tau:=\{c\in B_f^m: c\subseteq Q^*\}.$ We shall prove one important property of the system $\tau.$ To this end, we recall one notation. For a square $a\in \tau$ its neighborhood $N(a)$ is defined as
$$(N_{\tau}(a)=)N(a)=\{b\in \tau: d(a,b)\leq 2l(a),\frac{1}{2}\leq \frac{l(a)}{l(b)}\leq 2\},$$
where $d$ stands for the Hausdorff distance between sets. Note that $\#N(a)\lesssim 1.$ Indeed, this follows from the fact that if $l(a)=2^{-k_0}$ with some $k_0\in \mathbb N,$ then $l(b)$ for $b\in N(a)$ can take only three values: $2^{k_0-1},2^{k_0},2^{k_0+1}.$ 
In the following lemma we establish a property of the system $\tau$ that we shall need.

\begin{lemma}
 Suppose that $a\in \tau$ and $b\in \tau \backslash N(a).$ It follows that if $l(b)\leq 2l(a)$ then $d(2a,2b)\geq l(a)/2$, and if $l(b)=2^k l(a)$ for some natural $k\geq 2$, then $d(2a,2b)\geq 2 \mu_{k-2}l(a).$ 
\end{lemma}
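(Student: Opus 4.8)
The plan is to analyze the two cases separately, in each case reducing the estimate on the thickened squares $2a, 2b$ to an estimate on the original squares $a, b$ together with the structure of the tails that produced them. Recall that $a, b \in \tau$ are maximal elements of $B_f = \{t(c)\}_{c \in A_f^m}$ lying inside $Q^*$; in particular each of $a$ and $b$ belongs to $t_p(c)$ for some essential maximal square $c$ and some level $p \geq 0$. The key geometric input is the way the index ranges $\alpha_p, \beta_p$ and the counts $\mu_p = [(2.5)^p]$ spread the cells of $t_p(c)$ out from the core square $c$: the cells of $t_p(c)$ sit at $\|\cdot - c_c\|_\infty$-distance at least $l(c)/2 + \sum_{q=1}^{p-1}(\mu_q/2^q)l(c)$ from the centre, so consecutive annuli $t_p(c)$ are separated by gaps that grow geometrically in $p$ relative to the side length $l(c)/2^p$ of the cells they contain.

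First I would treat the case $l(b) \le 2 l(a)$. Here I want $d(2a,2b) \ge l(a)/2$. Since $b \notin N(a)$ and $l(a)/l(b) \in \{1/2, 1, 2\}$ is \emph{not} excluded by the side-length condition, the only way $b \notin N(a)$ is that $d(a,b) > 2 l(a)$. I would then observe that passing from $a$ to $2a$ (and $b$ to $2b$) enlarges each square by at most $l(a)/2$ on each side (respectively $l(b)/2 \le l(a)$ on each side of $b$), so $d(2a, 2b) \ge d(a,b) - l(a)/2 - l(b)/2 \ge d(a,b) - 3l(a)/2$. This is not quite enough from $d(a,b) > 2l(a)$ alone if $d(a,b)$ is measured loosely, so the point I would actually exploit is that $a$ and $b$ are dyadic squares with $l(b) \le 2l(a)$, hence $d(a,b)$ — if positive — is a multiple of $l(a)$ or of $l(a)/2$; combined with $d(a,b) > 2l(a)$ this forces $d(a,b) \ge 2l(a)$ (or the next available dyadic value), and then $d(2a,2b) \ge 2l(a) - 3l(a)/2 = l(a)/2$. (I should double-check the exact dyadic spacing here: if $l(b) = 2l(a)$ the relevant lattice is the coarser one, and a short case check on which of $l(b) = 2^{\pm1}l(a), l(a)$ occurs will pin down the constant $1/2$.)

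Second, the case $l(b) = 2^k l(a)$, $k \ge 2$, which is where the tail structure does the real work. Let $c$ be the essential maximal square with $a \in t_p(c)$ for the appropriate $p$. The side length of a cell in $t_p(c)$ is $l(c)/2^p$, so $l(a) = l(c)/2^p$; since $a$ is \emph{maximal} in $B_f^m$ and $b$ with $l(b) = 2^k l(a) = l(c)/2^{p-k}$ is a larger square also in $\tau$, I would argue that $b$ cannot be a cell of the same tail $t(c)$ at a shallower level without violating maximality or disjointness — so $b$ comes from the tail of a different essential square $c'$, or from a much shallower level of $t(c)$ that got absorbed. The crucial estimate is that the core square $c$ together with its first $p-1$ tail-annuli occupies an $\|\cdot\|_\infty$-ball of radius at least $l(c)/2 + \sum_{q=1}^{p-1}(\mu_q/2^q)l(c) \ge \mu_{p-1} 2^{-(p-1)} l(c)/2 \gtrsim \mu_{p-1} l(a)$ around $c_c$, while $a \in t_p(c)$ sits on the boundary of this ball; any square $b$ of side $2^k l(a)$ that is not in $N(a)$ must keep clear of this ball by a margin, and translating "margin around a radius-$R$ ball in the max-norm" into Euclidean Hausdorff distance gives $d(2a, 2b) \gtrsim \mu_{k-2} l(a)$ after absorbing the factor-$2$ enlargements (which cost only $\lesssim l(b) = 2^k l(a)$, dominated by $\mu_{k-2} l(a)$ since $\mu_{k-2} = [(2.5)^{k-2}]$ grows faster than $2^k$... wait — here I must be careful: $2.5^{k-2}$ vs $2^k = 4 \cdot 2^{k-2}$, so for small $k$ the enlargement is \emph{not} dominated, and I will need the constant in front, i.e. prove $d(2a,2b) \ge 2\mu_{k-2} l(a)$ exactly, chasing the explicit $\alpha_p, \beta_p$ formulas rather than $\lesssim$).

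The main obstacle I anticipate is precisely this last point: the lemma asks for the \emph{sharp} constants ($l(a)/2$ in the first case, $2\mu_{k-2} l(a)$ in the second), not merely $\gtrsim$, so I cannot be cavalier about the factor-$2$ dilations or about lower-order terms in the sums $\sum \mu_q/2^q$. I would handle this by working directly with the integer index ranges: a cell $Q_{p,i,j} \in t_p(c)$ has its indices $i,j$ confined to $[\beta_p, \alpha_p + \mu_p - 1]$ in the $2^{-k-p}$-lattice, and I would compute, for $b$ drawn from level $p - k$ (side $2^k$ times larger) or from a disjoint tail, the exact minimal separation of the dilates, using that $\alpha_p - (\alpha_{p-1}+\mu_{p-1}) \cdot 2 = 2\mu_{p-1}$-type telescoping identities built into the definition of $\alpha_p$. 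The bookkeeping is routine once set up, but getting the $2\mu_{k-2}$ rather than $\mu_{k-2}$ or $\mu_{k-1}$ requires care about which annulus $b$ can reach, so that is where I would concentrate the detailed computation.
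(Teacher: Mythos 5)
Your first case has a genuine gap: the hypothesis $l(b)\le 2l(a)$ also contains the subcase $l(a)\ge 4l(b)$, where the size-ratio condition $\tfrac12\le l(a)/l(b)\le 2$ in the definition of $N(a)$ fails automatically, so $b\notin N(a)$ gives no information about $d(a,b)$ at all; your argument ``the only way $b\notin N(a)$ is that $d(a,b)>2l(a)$'' covers only $l(a)/l(b)\in\{1/2,1,2\}$. In that missing subcase (just as in your second case) the whole content of the lemma is a structural separation property of the regularized system $\tau$, which your proposal never establishes. (Incidentally, in the subcase you do treat no dyadic-spacing refinement is needed: $d(a,b)>2l(a)$ already gives $d(2a,2b)\ge d(a,b)-l(a)/2-l(b)/2\ge l(a)/2$, exactly as in the paper.)

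For the separation property itself, your sketch is not the mechanism that works. The paper first proves the claim: if $a,b\in\tau$ and the larger square has side $2^k$ times the smaller, $k\ge2$, then $d(a,b)\ge \mu_{k-1}2^{-(k-1)}$ times the larger side. Its proof rests on two facts you never invoke: (i) the cells of a single tail $t(c)$ tile all of $\mathbb R^2$, and (ii) no element of $\tau$ can be \emph{strictly contained} in a tail cell, by maximality in $B_f$. Hence, once one of the two squares lies in (or generates) a tail $t(c)$, the other decomposes as a finite union of cells of that same tail; passing to the nearest such cell reduces everything to the distance between two cells of one tail, which is read off the annular definition (the annuli between the levels with cell sizes $l(a)$ and $2^kl(a)$ have total width at least $\mu_{k-1}2^{-(k-1)}$ times the larger side). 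Your alternative --- that $b$ ``must keep clear'' of the sup-norm ball occupied by $c$ and its first $p-1$ annuli --- is unjustified: a large square from another tail may perfectly well overlap that region, since it may \emph{contain} many small cells of $t(c)$ (only the reverse inclusion is forbidden by maximality); moreover the radius you estimate is governed by $\mu_{p-1}$, i.e.\ by the level $p$ of $a$ inside its own tail, whereas the conclusion $d(2a,2b)\ge 2\mu_{k-2}l(a)$ must be expressed through the size ratio $k$. The dilation cost you were rightly worried about is absorbed in the paper simply by $d(2a,2b)\ge d(a,b)-l(b)$ combined with the claim's bound. So the proposal gets the comparable-size subcase essentially right, but the key covering-plus-maximality argument, which carries both the $l(a)\ge 4l(b)$ subcase and the $l(b)=2^kl(a)$ case, is missing.
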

\begin{proof}
We shall first prove the following claim. If $a,b\in \tau$ and $l(a)=2^kl(b)$ for some $k\geq 2, k\in \mathbb N$, then $d(a,b)\geq l(a)\mu_{k-1}/2^{k-1}.$ Indeed, if $b\in t(a),$ then it is obvious from the definition of $t(a).$ Hence, $a,b\in A_f^m.$ Note that if $a\in t(c)$ for some $c\in \tau,$ then $b$ is covered by the squares from $t(c).$ (In fact, the whole $\mathbb R^2$ is covered by them.) We deduce that $$b=\bigcup_{j\in J}b_j$$ for some $b_j\in t(c).$ Indeed, all the squares from the last line are dyadic, but the inclusion $b_j\subset b$ can never happen since $b$ is maximal. Let $b^\star$ be the closest square from $\{b_j\}_{j\in J}$ to the square $a$. Then $d(a,b)=d(a,b^\star)$ and at the same time $a,b^\star\in t(c)$. By the already proved, we infer the inequality $d(a,b)\geq l(a) \mu_{k-1}/2^{k-1},$ and our claim is proved.

Let us now finish the proof of the lemma. If $b\in \tau \backslash N(a), l(b)\leq 2l(a)$ and $l(a)\leq 2l(b)$, then $d(a,b) \geq 2l(a)$ and thanks to the triangle inequality, we infer the following chain of inequalities
$$d(2a,2b)\geq d(a,b)-d(a,2a)-d(b,2b)\geq d(a,b)-\frac{l(a)}{2}-\frac{l(b)}{2}\geq 
\frac{l(a)}{2}.$$ 
On the other hand, if $b\in \tau \backslash N(a), l(b)\leq 2l(a)$ and $l(a)>2l(b),$ then $l(a)\geq 4l(b)$ and hence
$$d(2a,2b)\geq d(a,b)-\frac{l(a)}{2}-\frac{l(b)}{2}\geq \left(\frac{\mu_1}{2}-1\right)l(a)=\frac{l(a)}{2}.$$ 
It remains to rule out the case when $l(b)>2l(a).$ In this case, $l(b)=2^kl(a)$ with some natural $k\geq 2$, and hence
$$d(2a,2b)\geq d(a,b)-l(b)\geq \frac{\mu_{k-1}}{2^{k-1}}l(b)-l(b)\geq 2\mu_{k-2}l(a),$$
and the needed property is proved. 
\end{proof}

The proof of the following remark is now an easy exercise.  
\begin{rem}
\label{mda}
Let $A$ be an arbitrary system of mutually disjoint dyadic squares. Consider the regularized family $B^m$ where $B=\{t(a)\}_{a\in A}$. Suppose that $a\in B^m$ and $b\in \tau \backslash N_{B^m}(a).$ It follows that if $l(b)\leq 2l(a)$ then $d(2a,2b)\geq l(a)/2$, and if $l(b)=2^k l(a)$ for some natural $k\geq 2$, then $d(2a,2b)\geq 2 \mu_{k-2}l(a).$ 
\end{rem}

As a direct consequence of the lemma, we deduce that the multiplicity
$\#\{\alpha\in 2\tau:x\in \alpha\}$ is uniformly bounded in  $x\in \mathbb R^2$. 
Indeed, if $b\in\tau\backslash N(a),$ then $d(2a,2b)>0$ and
$$\sup\limits_{x\in \mathbb R^2}\#\{\alpha\in 2\tau:x\in \alpha\}\leq \sup\limits_{a\in\tau}\#N(a)\lesssim 1.$$

We are ready now to define the function $F$. We first fix a ``cup'' function $\phi$, i.e. $\phi \in \mathrm C^{\infty}(\mathbb R^2)$ satisfying $0\leq \phi(x)\leq 1$ for all $x\in \mathbb R^2$, $\phi\equiv 0$ outside $3/2Q^*$ and $\phi\equiv 1$ on $Q^*$. Second, for a square $a\in \tau$ we pose $$\phi_a(x)=l(a)\phi\Bigl(\frac{x-c_a}{l(a)}\Bigr).$$
Simple calculation shows that $\mathcal R_1\phi_b(x)=l(b)\mathcal R_1\phi\left((x-c_b)/l(b)\right).$ Hence, $$\|\nabla(\mathcal R_1\phi_b)\|_{\infty}\lesssim 1.$$ We finally define $F$ as the following sum $F=\sum_{a\in \tau} \phi_a.$

Now we have to check the required properties of the majorant $F.$ The first one follows readily from the definition of $F.$ To prove the second one, note that for all $a\in \tau$ holds $\|f\|_{L^{\infty}(a)}\leq l(a)$.  Indeed, suppose the contrary, i.e. that $\|f\|_{L^{\infty}(a_0)}> l(a_0)$ for some $a_0\in \tau$. This means that
$$\|f\|_{L^{\infty}(2a_0)}\geq \|f\|_{L^{\infty}(a_0)}> l(a_0)=\frac{l(2a_0)}{2},$$
which in turn signifies that $2a_0$ is an essential square and hence $2a_0\in\tau$. Contradiction with the definition of $\tau.$ From here we deduce that if $x\in a\in \tau$, then $|F(x)|\geq l(a)\geq \|f\|_{L^{\infty}(a)}\geq f(x).$

Next we estimate the integral of the function $F$. To this end, we prove a variant of the Hadamard--Landau inequality which is appropriate for our goals.
\begin{lemma}
\label{eprst}
Let $f$ and $\kappa$ be as above. Suppose that $a\in \tau.$ Then $\int_a f\gtrsim \|f\|^3_{L^{\infty}(a)}/\kappa^2.$
\end{lemma}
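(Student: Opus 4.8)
The inequality $\int_a f \gtrsim \|f\|_{L^\infty(a)}^3/\kappa^2$ is a one-sided quantitative estimate: a nonnegative $\kappa$-Lipschitz function that attains a large value somewhere on $a$ cannot be small in integral. The plan is to localize around the near-maximum point and use only the Lipschitz bound from below. Concretely, let $M:=\|f\|_{L^\infty(a)}$ and pick $x_0\in \overline a$ (or a point arbitrarily close to the supremum) with $f(x_0)$ close to $M$; say $f(x_0)\geq M/2$ after passing to a slightly smaller value, which only costs a constant. Since $f\in\mathrm{Lip}(\kappa,a)$, for every $x$ with $|x-x_0|\leq r$ we have $f(x)\geq f(x_0)-\kappa r \geq M/2 - \kappa r$. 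Choosing $r:=M/(4\kappa)$ gives $f(x)\geq M/4$ on the disk $B(x_0,r)$.

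The second ingredient is that a definite fraction of this disk lies inside $a$. Here I would use that $a\in\tau$, so by the property established just above (namely $\|f\|_{L^\infty(a)}\leq l(a)$, proven for all $a\in\tau$ in the preceding paragraph), we have $M\leq l(a)$, hence $r = M/(4\kappa)\leq l(a)/(4\kappa)\leq l(a)/4$ using $\kappa\geq 1$. Since $x_0\in\overline a$ and $r\leq l(a)/4$, the portion $B(x_0,r)\cap a$ contains a quarter-disk (the worst case being $x_0$ at a corner of $a$), so its area is at least $c\,r^2$ for an absolute constant $c>0$. Therefore
\[
\int_a f \;\geq\; \int_{B(x_0,r)\cap a} f \;\geq\; \frac{M}{4}\cdot c\,r^2 \;=\; \frac{cM}{4}\cdot\frac{M^2}{16\kappa^2} \;=\; \frac{c}{64}\cdot\frac{M^3}{\kappa^2},
\]
which is the claimed bound $\int_a f \gtrsim \|f\|_{L^\infty(a)}^3/\kappa^2$.

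The only genuinely delicate point is handling the supremum rigorously: $f$ is defined on the square $a$ (or $\overline a$), and the essentiality/tail construction works with half-open dyadic squares, so one should be slightly careful that the near-maximizing point and the quarter-disk argument refer to the closure $\overline a$ and that replacing $a$ by $\overline a$ changes neither the integral nor $\|f\|_{L^\infty}$. I do not expect any real obstacle here — one simply picks $x_0\in\overline a$ with $f(x_0)\geq \tfrac12\|f\|_{L^\infty(a)}$ (possible since $f$ is continuous and $\overline a$ compact, and in fact one can take $f(x_0)$ as close to $M$ as desired), and notes that the factor $\kappa\geq 1$ guarantees $r\leq l(a)/4$ so that the quarter-disk genuinely sits in $\overline a$. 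All implied constants are absolute, in particular independent of $a$ and $Q$, as required.
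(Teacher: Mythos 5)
Your argument is correct and is essentially the paper's proof: the paper takes the exact maximizer $x'$ and bounds $\int_a f$ from below by a quarter of the volume of the Lipschitz cone $f(x')-\kappa|x-x'|$ over $a$, using $\kappa\geq 1$ and $\|f\|_{L^\infty(a)}\leq l(a)$ to ensure the cone's base fits (up to the corner worst case), exactly as you use these facts to fit your quarter-disk. Your only variation is to replace the full cone volume by a single level set ($f\geq M/4$ on a disk of radius $M/(4\kappa)$) and a near-maximizer instead of the attained maximum, which changes nothing but absolute constants.
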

\begin{proof}
Let $x^{\prime}\in a$ be a point such that $f(x^{\prime})=\|f\|_{L^{\infty}(a)}.$ Then $f(x)\geq f(x^{\prime})-\kappa|x-x^{\prime}|,$ and hence $\int_a f\geq \mathrm{Vol}(\Gamma_{\psi}),$ where $\psi(x)=\left(f(x^{\prime})-\kappa|x-x^{\prime}|\right)\restr a$ and $\Gamma_{\psi}$ is the subgraph of the function $\psi.$ Denote by $C_{x^{\prime}}$ the set
$$C_{x^{\prime}}=\{(x_1,x_2,x_3)\in\mathbb R^3:0\leq x_3\leq f(x^{\prime}),  \kappa |x^{\prime}-(x_1,x_2)|\leq f(x^{\prime})-x_3 \},$$
(here $|\ldots|$ is the usual Euclidean norm on the plane). Note that $C_{x^{\prime}}$ is nothing but a cut cone centered at the point $(x^{\prime},f(x^{\prime}))$.
Since $\kappa\geq 1\geq \|f\|_{L^{\infty}(a)}/l(a),$ we deduce that
$$\int_a f \geq \mathrm{Vol}(\Gamma_{\psi})\geq \frac{1}{4}\mathrm{Vol}(C_{x^{\prime}}) \gtrsim \|f\|_{L^{\infty}(a)}\Bigl(\frac{\|f\|_{L^{\infty}(a)}}{\kappa}\Bigr)^2=\frac{\|f\|^3_{L^{\infty}(a)}}{\kappa^2},$$
where $\mathrm{Vol}$ denotes the usual Euclidean volume in $\mathbb R^3$. So, the lemma is proved.
\end{proof}

We use this result to estimate the integral of the function $F$:
\begin{multline}
\label{chastnoe}
\int_{\mathbb R^2} F\leq \sum_{b\in A_f^{m}} \int_{\mathbb R^2} \phi_b +  \sum_{c\in A_f^{m}} \sum_{b\in t(c)\backslash c} \int_{\mathbb R^2} \phi_b \leq \sum_{b\in A_f^{m}}l(b)^3 + \sum_{c\in A_f^{m}}\sum_{b\in t(c)\backslash c} l(b)^3 \lesssim \\ 
\sum_{b\in A_f^{m}}\|f\|_{L^{\infty}(a)}^3 + \sum_{c\in A_f^{m}}\sum_{p=1}^{\infty}\sum_{b\in t_p(c)} l(b)^3 \lesssim \sum_{b\in A_f^{m}} \kappa^2\int_b f + \sum_{c\in A_f^{m}}\sum_{p=1}^{\infty} \#t_p(c)\biggl(\frac{l(c)}{2^p}\biggr)^3\lesssim \\
\kappa^2\int_{Q^*}f + \sum_{c\in A_f^{m}} l(c)^3 \lesssim \kappa^2\int_{Q^*} f,
\end{multline}
where the inequality before the last one follows from the fact that $\#t_p(c)\lesssim \mu_p^2\leq (2.5)^{2p}<2^{3p}$.

It is now left to derive the inequalities on the derivatives of the Riesz transformations of the function $F$. We shall only prove the estimate $\|(\mathcal R_1F)^\prime_{I}\|_{L^{\infty}(\mathbb R^2)}\lesssim 1$; the other estimates are left to the reader as an exercise. We shall first obtain this estimate for $x\in \bigcup_{b\in\tau}{2b}.$ Let $a(=a(x))$ be the square from $\tau$ such that $x\in 2a$. We infer the following inequality
\begin{multline*}
|(\mathcal R_1F)^\prime_{I}(x)|\leq \\
\sum_{b\in N(a)}|(\mathcal R_1\phi_b)^\prime_{I}(x)| + \sum_{\substack{b\in \tau\backslash N(a),\\ l(b)\leq 2l(a)}}|(\mathcal R_1\phi_b)^\prime_{I}(x)|+ \sum_{k=2}^{\infty}\sum_{\substack{b\in \tau\backslash N(a),\\ l(b)=2^k l(a)}}|(\mathcal R_1\phi_b)^\prime_{I}(x)|=: S_1+S_2+S_3.
\end{multline*}

We shall estimate the terms $S_1, S_2$ and $S_3$ separately. We begin with the sum $S_1,$ whose estimate turns out to be easy
$$S_1\leq \#N(a) \sup_{b\in \tau}\|(\mathcal R_1\phi_b)^\prime_{I}\|_{L^{\infty}(\mathbb R^2)}\lesssim 1.$$

We further proceed to the second term. We use an easy estimate on the kernel of the Riesz transformation, the discussed property of the system $\tau$ and the fact that the covering $\{2b\}_{b\in\tau}$ has finite multiplicity to write
\begin{multline*}
S_2\lesssim \sum_{\substack{b\in \tau\backslash N(a),\\ l(b)\leq 2l(a)}}\int_{\mathbb R^2}\phi_b(t)\frac{\partial}{\partial x_1}\Bigl(\frac{t_1-x_1}{|t-x|^3}\Bigr)dt \lesssim\sum_{\substack{b\in \tau\backslash N(a),\\ l(b)\leq 2l(a)}}\int_{\mathbb R^2}\frac{\phi_b(t)}{|t-x|^3}dt \lesssim \\
\sum_{\substack{b\in \tau\backslash N(a),\\ l(b)\leq 2l(a)}}\int_b \frac{l(a) dt}{|t-x|^3}\lesssim l(a)\int_{\{|u|\geq l(a)/2\}}\frac{du}{|u|^3}\lesssim 1.
\end{multline*}

The third term can be estimated using the same property of $\tau$:
\begin{multline*}
S_3\lesssim \sum_{k=2}^{\infty}\sum_{\substack{b\in \tau\backslash N(a),\\ l(b)=2^k l(a)}} \int_{2b}\frac{\phi_b(t)dt}{|t-x|^3}\leq \sum_{k=2}^{\infty}2^kl(a)\sum_{\substack{b\in \tau\backslash N(a),\\ l(b)=2^k l(a)}} \int_{2b}\frac{dt}{|t-x|^3} \lesssim \\ \sum_{k=2}^{\infty}2^kl(a) \int_{\{|u|\geq 2\mu_{k-2}l(a)\}}\frac{du}{|u|^3}\lesssim 1,
\end{multline*}
and the lemma for $x\in \bigcup_{b\in\tau}{2b}$ follows. 

Next, if a point $z\in \mathbb R^2$ is situated at a positive distance from the set $\bigcup_{b\in\tau}{2b},$ then denote $x$ to be the closest point to $z$ of this set, and let $a(=a(x))$ be a square as above. We infer the following estimates
\begin{multline*}
|(\mathcal R_1F)^\prime_{I}(z)|\leq \sum_{b\in \tau\backslash N(a)} |(\mathcal R_1\phi_b)^\prime_{I}(z)| + \sum_{b\in N(a)} |(\mathcal R_1\phi_b)^\prime_{I}(z)|\lesssim\\
\sum_{b\in \tau\backslash N(a)} \int_{\mathbb R^2}\frac{\phi_b(t)}{|t-x|^3}dt+\#N(a) \sup_{b\in \tau}\|(\mathcal R_1\phi_b)^\prime_{I}\|_{L^{\infty}(\mathbb R^2)}.
\end{multline*}
Referring to the estimates of the terms $S_1,S_2$ and $S_3$ above, we conclude that the lemma is proved in full generality.
\end{proof}
\begin{rem}
\label{rem0}
The choice of the sequence $\mu_p$ is not the only possible. For instance, one can take sequence $\mu_p=[\lambda]^p$ with any $\lambda\in (2,2^{3/2})$, see the second line of the formula~\eqref{chastnoe}. See also the discussion in the Appendix.
\end{rem}

Let us prove the theorem. Denote $\mu$ to be the Lipschitz constant of the function $\Omega.$ Note that we may assume in the theorem that $\Omega(x)=0$ for $|x|\leq \max(\Omega(0)/\mu,1)=:\sigma.$ Indeed, if it is not the case consider the function $\widetilde{\Omega}(\cdot)=\max(0,\Omega-M)(\cdot),$ where $M:=\max_{x\in B(0,\sigma)} \Omega(x)$. If $\widetilde{\Omega_1}$ is a majorant of $\widetilde{\Omega},$ then $\Omega_1:=\widetilde{\Omega_1}+M$ will be a majorant of $\Omega.$ As a consequence we deduce that $\Omega(x)\leq 2\mu|x|.$ Indeed, this is now obvious if $|x|\leq \sigma,$ and otherwise $\Omega(x)\leq\Omega(0)+\mu|x|\leq 2\mu|x|.$ 

We shall next prove the following inequality 
\begin{equation}
\label{kryakrya}
\Omega(x)\lesssim |x|\biggl(\int_{\{t:|t|\geq |x|\}}\Omega(t)dP(t)\biggr)^{1/3}.
\end{equation}
Indeed, this is obvious once $|x|\leq \sigma,$ and for $|x|>\sigma$ we first write
\begin{equation}
\label{kuku}
\int_{\{t:|t|\geq |x|\}}\Omega(t)dP(t) \geq \int_{\{t:3|x|\geq|t|\geq |x|\}}\Omega(t)dP(t) \gtrsim \int_{\{t:3|x|\geq|t|\geq |x|\}}\Omega(t)\frac{dt}{|x|^3}.
\end{equation}
Since $\Omega$ is Lipschitz, arguing as in Lemma~\ref{eprst} (i.e. comparing the integral of the function $\Omega$ with the volume of a cut cone)  we infer the inequality
\begin{equation}
\label{kukuku}
\int_{\{t:3|x|\geq|t|\geq |x|\}}\Omega(t)dt  \gtrsim \Omega(x)^3.
\end{equation}
Estimates~\eqref{kuku} and~\eqref{kukuku} prove~\eqref{kryakrya}. As a consequence, we may assume that $\Omega(x)\leq \varepsilon |x|.$ Indeed, since $\Omega\in L^1(dP),$ we first find for the given $\varepsilon>0$ a real $R>0$ big enough to guarantee $\int_{\{t:|t|\geq R\}}\Omega(t)dP(t)\leq \varepsilon^3,$ and further modify the function $\Omega$ for $|x|\leq R$ as in the previous paragraph, if needed.

We consider the following decomposition of the plane
$$\mathbb R^2= C_{0,0,0}\cup\bigcup_{k=0}^{\infty}\bigcup_{\substack{(i,j)\in\{-1,0,1\}^2,\\ (i,j)\neq (0,0)}}C_{i,j,k},$$
where $C_{0,0,0}=[-1,1)^2$ and $C_{i,j,k}=[i2^{k},(i+1)2^{k})\times[j2^{k},(j+1)2^{k}).$ We apply Lemma~\ref{local} to the squares $Q:=C_{i,j,k}$ and the corresponding functions $f:=\Omega\restr {C_{i,j,k}}$ for each triplet $(i,j,k)$ as above. Note that then, referring to the inequality $\Omega(x)\leq \varepsilon |x|$ we conclude that $\|f\|_{L^{\infty}(Q)}\leq \varepsilon l(Q)/2$ and hence we can choose $\delta=\varepsilon/2$ in Lemma~\ref{local}. This yields functions $F_{i,j,k}$. We are ready to define the majorant $\Omega_1:$
$$\Omega_1:=\sum_{k=0}^{\infty}\sum_{(i,j)\in\{-1,0,1\}^2}F_{i,j,k}.$$

We shall now check the required properties of $\Omega_1.$ The property~\ref{a} follows obviously from Lemma~\ref{local}. We proceed to~\ref{b}
\begin{multline*}
\int_{\mathbb R^2} \Omega_1(t) dP(t) = \sum_{i,j,k} \int_{\mathbb R^2} F_{i,j,k}(t) dP(t) \lesssim \\ 
\sum_{i,j,k}\int_{3/2 C_{i,j,k}} F_{i,j,k}(t) \frac{dt}{2^{3k}} \lesssim \sum_{i,j,k}\frac{1}{\varepsilon^2}\int_{3/2 C_{i,j,k}} \Omega(t) \frac{dt}{2^{3k}} \lesssim \frac{1}{\varepsilon^2}\int_{\mathbb R^2}\Omega(t) dP(t) <\infty.
\end{multline*}

So, it is left to check that the third conclusion holds. First, fix a point $x\in \mathbb R^2.$ Second, denote by $S(x)$ the square from the family $\mathcal F=\{C_{i,j,k}\}_{\{-1,0,1\}^2\times \mathbb N}$ such that $x\in S(x).$ Next, denote by $V(x)$ the subfamily of $\mathcal F$ consisting of the neighbour squares of $S(x)$ and by $W(x)$ its completion: $W(x)= \mathcal F \backslash V(x).$ Finally, write the function $\Omega_1$ as a sum of two functions as follows:
$$\Omega_1=\sum_{(i,j,k)\in W(x)} F_{i,j,k}+ \sum_{(i,j,k)\in V(x)} F_{i,j,k}=:\omega_1+\omega_2.$$
Since there is only a finite number of squares in the family $V(x)$, we conclude that
$$|(\mathcal R_1\omega_2)^{\prime}_{I}(x)|\lesssim \#V(x) \sup_{(i,j,k)\in V(x)}\|\nabla(\mathcal R_1F_{i,j,k})\|_{\infty}\lesssim \varepsilon,$$
where we have just used~\ref{three} in the last estimate. On the other hand, since $\mathrm{spt} (\omega_1)\subseteq\bigcup_{(i,j,k)\in W(x)}3/2C_{i,j,k}$ we deduce that $$\mathrm{spt} (\omega_1)\subseteq\{t\in \mathbb R^2: |t-x|\geq \frac{l(S(x))}{4}\}\subseteq\{t\in \mathbb R^2: |t-x|\geq \frac{|x|}{16}\}.$$ Hence we arrive at the following chain of inequalities
\begin{multline*}
|(\mathcal R_1\omega_1)^{\prime}_{I}(x)|=\Bigl|\Bigl(\int_{\mathbb R^2}\omega_1(t)\frac{t_1-x_1}{|t-x|^3}dt\Bigr)^{\prime}_{I}\Bigr|=\Bigl|\int_{\mathbb R^2}\omega_1(t)\frac{\partial}{\partial x_1}\Bigl(\frac{t_1-x_1}{|t-x|^3}\Bigr)dt\Bigl|\lesssim
\\ \int_{\mathbb R^2}\omega_1(t)\frac{1}{|t-x|^3}dt \lesssim \int_{\mathbb R^2}\Omega_1(t) dP(t) \lesssim  \frac{1}{\varepsilon^2}\int_{\mathbb R^2}\Omega(t) dP(t) \lesssim \varepsilon,
\end{multline*} 
and the Theorem is proved.

\end{proof}

\section{Appendix}
Here we state and discuss the multidimensional version of Theorem~\ref{thm1}.
\begin{prop} (the multidimensional Nazarov lemma)
\label{thm2}
Let $\Omega\in L^1(dP_n)\cap \mathrm{Lip}(\mathbb R^n)$ be a positive function. Then for each $\varepsilon > 0$ there exists a function $\Omega_1
$, satisfying 
\begin{enumerate}
\item  $\Omega(x) \leq \Omega_1(x)$ for all $x\in \mathbb R^n;$
\item  $\Omega_1 \in L^1(dP_n);$
\item  $R_1\Omega_1 \in \mathrm{Lip}(\varepsilon, \mathbb R^n),\ldots, R_n\Omega_1 \in \mathrm{Lip}(\varepsilon, \mathbb R^n),$ where $R_1, \ldots, R_n$ are the Riesz transformations in $\mathbb R^n.$
\end{enumerate}
\end{prop}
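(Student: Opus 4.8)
The plan is to transcribe the proof of Theorem~\ref{thm1} into $\mathbb{R}^n$, replacing squares by $n$-dimensional dyadic cubes and adjusting every exponent to the ambient dimension. First I would state and prove the local version: given a cube $Q\subset\mathbb{R}^n$, reals $\delta,\kappa>0$, and a nonnegative $f\in\mathrm{Lip}(\kappa,Q)$ with $\|f\|_{L^\infty(Q)}\leq\delta l(Q)$, there is $F\in\mathrm{C}^\infty(\mathbb{R}^n)$ that vanishes outside $\frac32 Q$, majorizes $f$ on $Q$, satisfies $\|\nabla(R_jF)\|_\infty\lesssim\delta$ for $j=1,\dots,n$, and obeys $(\delta^n/\kappa^n)\int_{\mathbb{R}^n}F\lesssim\int_Q f$. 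The normalization step ($\delta=1$, $Q=[-\tfrac12,\tfrac12]^n$, $\kappa\geq1$), the notions of essential cube ($\|f\|_{L^\infty(a)}\geq l(a)/2$), maximal essential cubes $A_f^m$, the tails $t(a)=\{t_p(a)\}_{p\geq0}$, and the regularized family $\tau$ all carry over word for word; the one genuine combinatorial change is that the $p$-th shell $t_p(a)$ — the cubes of edge length $l(a)/2^p$ filling the annulus $\{l(a)/2+\sum_{q<p}(\mu_q/2^q)l(a)\leq\|x-c_a\|_\infty<l(a)/2+\sum_{q\leq p}(\mu_q/2^q)l(a)\}$ with $\mu_p=[\lambda]^p$ — now satisfies $\#t_p(a)\lesssim\mu_p^n$ instead of $\lesssim\mu_p^2$, by the same volume-counting argument applied in $\mathbb{R}^n$.

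Next I would reprove the separation lemma and Remark~\ref{mda} for $\tau$ in $\mathbb{R}^n$; the arguments use only $\|\cdot\|_\infty$-geometry of cubes and are unchanged, giving $d(2a,2b)\geq l(a)/2$ when $l(b)\leq2l(a)$ and $d(2a,2b)\geq2\mu_{k-2}l(a)$ when $l(b)=2^kl(a)$ with $k\geq2$, hence finite multiplicity of $\{2b\}_{b\in\tau}$ and $\#N(a)\lesssim1$. Then I would establish the $\mathbb{R}^n$ Hadamard--Landau inequality $\int_a f\gtrsim\|f\|_{L^\infty(a)}^{n+1}/\kappa^n$ by comparing $\int_a f$ with the volume in $\mathbb{R}^{n+1}$ of the cut cone with apex height $\|f\|_{L^\infty(a)}$ over an $n$-ball of radius $\sim\|f\|_{L^\infty(a)}/\kappa$. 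With $F=\sum_{a\in\tau}\phi_a$, $\phi_a(x)=l(a)\phi((x-c_a)/l(a))$, the majorization follows as before from $\|f\|_{L^\infty(a)}\leq l(a)$ for $a\in\tau$, and the integral bound follows from the telescoping estimate $\int_{\mathbb{R}^n}F\leq\sum_{b\in A_f^m}l(b)^{n+1}+\sum_{c\in A_f^m}\sum_p\#t_p(c)(l(c)/2^p)^{n+1}\lesssim\kappa^n\int_{Q^*}f+\sum_c l(c)^{n+1}\lesssim\kappa^n\int_{Q^*}f$, where the $p$-series converges because $\#t_p(c)(l(c)/2^p)^{n+1}\lesssim(\lambda^n/2^{n+1})^p\, l(c)^{n+1}$ and $\lambda^n<2^{n+1}$.

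The Riesz-transform estimates again use only the scaling identity $R_j\phi_b(x)=l(b)(R_j\phi)((x-c_b)/l(b))$, which gives $\|\nabla(R_j\phi_b)\|_\infty\lesssim1$; the pointwise bound $|\partial_{x_i}(c_n(t_j-x_j)/|t-x|^{n+1})|\lesssim|t-x|^{-(n+1)}$; and the elementary radial integral $\int_{\{|u|\geq r\}}|u|^{-(n+1)}\,du\lesssim1/r$ in $\mathbb{R}^n$. Splitting $|\nabla(R_jF)(x)|$ over $N(a)$, over $\{b\notin N(a):l(b)\leq2l(a)\}$, and over $\{b:l(b)=2^kl(a),\,k\geq2\}$, the three pieces are controlled exactly as $S_1,S_2,S_3$ in the planar proof; the last one converges as $\sum_k 2^kl(a)/(\mu_{k-2}l(a))\approx\lambda^2\sum_k(2/\lambda)^k<\infty$ since $\lambda>2$, and the estimate at points outside $\bigcup_{b\in\tau}2b$ reduces to the previous cases verbatim. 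For the global statement I would normalize $\Omega$ so that $\Omega\equiv0$ near the origin and $\Omega(x)\leq2\mu|x|$, prove $\Omega(x)\lesssim|x|\big(\int_{\{|t|\geq|x|\}}\Omega\,dP_n\big)^{1/(n+1)}$ by the same cut-cone comparison, reduce to $\Omega(x)\leq\varepsilon|x|$ by choosing $R$ with $\int_{\{|t|\geq R\}}\Omega\,dP_n\leq\varepsilon^{n+1}$, decompose $\mathbb{R}^n$ into the cubes $C_{i,j,k}$ (the evident $\mathbb{R}^n$ analogue of the planar dyadic annuli), apply the local lemma with $\delta=\varepsilon/2$, and set $\Omega_1=\sum F_{i,j,k}$; properties (1)--(3) then follow as in dimension two, using $(1+|x|^2)^{-(n+1)/2}\sim2^{-(n+1)k}$ on $C_{i,j,k}$ for property (2).

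The main point requiring care is that a single growth rate $\mu_p=[\lambda]^p$ must simultaneously separate the shells enough — forcing $\lambda>2$, so that $\mu_p/2^p\to\infty$ and the $S_3$-type and $S_2$-type series converge — and keep the total mass of the tails finite — forcing $\#t_p(a)(l(a)/2^p)^{n+1}\lesssim(\lambda^n/2^{n+1})^p$ to be summable, i.e. $\lambda<2^{1+1/n}$. Since $2<2^{1+1/n}$ for every $n\geq1$, the admissible window $\lambda\in(2,2^{1+1/n})$ is nonempty, and for $n=2$ it recovers the range $(2,2^{3/2})$ of Remark~\ref{rem0}; verifying that this window never degenerates and that every radial kernel integral retains the decay $\int_{\{|u|\geq r\}}|u|^{-(n+1)}\,du\lesssim1/r$ is the crux of why the whole construction survives in arbitrary dimension.
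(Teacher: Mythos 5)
Your proposal is correct and follows exactly the route the paper intends: the paper's own proof of the Proposition is simply the remark that the two--dimensional argument carries over with dimension-dependent parameters, specifically $\mu_p=[\lambda]^p$ with $2<\lambda<2^{(n+1)/n}$, which is precisely the window $\lambda^n<2^{n+1}$, $\lambda>2$ that you isolate from the tail-volume and $S_2,S_3$ estimates. Your transcription of the local lemma (with $\#t_p(a)\lesssim\mu_p^n$, the $\mathbb R^n$ Hadamard--Landau bound $\int_a f\gtrsim\|f\|_{L^\infty(a)}^{n+1}/\kappa^n$, and the kernel decay $\int_{\{|u|\geq r\}}|u|^{-(n+1)}du\lesssim 1/r$) is the intended exercise, carried out correctly.
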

\begin{proof}
The proof is almost identical to the one of Theorem~\ref{thm1} so we leave it to the reader as an exercise. We only remark that the parameters of the main construction will now depend on the dimension $n$. In more details, in this case one can choose the corresponding sequence $\mu_p$ as follows: $\mu_p=[\lambda]^p$ with any $\lambda$ satisfying $2<\lambda<2^{(n+1)/n}.$

\end{proof}
\renewcommand{\refname}{References}


\begin{thebibliography}{99}

\bibitem{belbarul}
A. Baranov, Yu. Belov and A. Ulanovskii, \emph{Gap Problem for Separated Sequences and Beurling--Malliavin Theorem}, J. Fourier Anal. Appl., vol. 23, no. 4, (2017), 877--885.

\bibitem{bermal1}
A. Beurling and P. Malliavin, \emph{On Fourier transforms of measures with compact support}, Acta Math., vol. 107, (1962), 291--309.

\bibitem{bermal2}
A. Beurling and P. Malliavin, \emph{On the closure of characters and the zeros of entire functions}, Acta Math., vol. 118, (1967), 79--93.

\bibitem{bourdyat} J. Bourgain and S. Dyatlov, \emph{Spectral gaps without the pressure condition}, Annals of Math., vol. 187, (2018), 825--867.

\bibitem{makpol}
N. Makarov and A. Poltoratski, \emph{Beurling-Malliavin theory for Toeplitz kernels}, Invent. Math., vol. 180, no. 3, (2010), 443--480.

\bibitem{nazhav}
D. Mashregi, F. Nazarov, and V. Khavin, \emph{The Beurling--Malliavin multiplier theorem: The seventh proof}, Algebra i Analiz, vol. 17, no. 5 (2005), 3--68. 

\bibitem{polt}
A. Poltoratski, \emph{Spectral gaps for sets and measures}, Acta Math., vol. 208, no. 1, (2012), 151--209.

\bibitem{sz}
D. Stolyarov and P. Zatitskiy, \emph{Sharp transference principle for BMO functions and $A^p$ weights}, https://arxiv.org/abs/1908.09497

\end{thebibliography}
\end{document}